\newtheorem{theorem}{Theorem}
\newtheorem{lemma}{Lemma}
\newtheorem{corollary}{Corollary}
\def\br{\textbf{r}}
\def\bx{\textbf{x}}  
\def\by{\textbf{y}}
\def\bz{\textbf{z}}
\def\bI{\textbf{I}}
\def\bP{\textbf{P}}
\def\bQ{\textbf{Q}}
\def\bR{\textbf{R}}
\def\bV{\textbf{V}}
\def\bo{\textbf{0}}
\def\bfvarepsilon{{\boldsymbol{\varepsilon}}}
\def\bfomega{{\boldsymbol{\omega}}}
\def\cA{\mathcal{A}}
\def\cB{\mathcal{B}}
\def\cH{\mathcal{H}}
\def\cK{\mathcal{K}}
\def\cM{\mathcal{M}}
\def\cN{\mathcal{N}}
\def\cO{\mathcal{O}}
\def\cS{\mathcal{S}}
\def\cX{\mathcal{X}}
\def\mE{\mathbb{E}}
\def\mP{\mathbb{P}}
\def\smskip{\smallskip}
\def\texitem#1{\par\smskip\noindent\hangindent 25pt
               \hbox to 25pt {\hss #1 ~}\ignorespaces}
\def\norm#1{\left\|#1\right\|}
\newcommand{\BEAS}{\begin{eqnarray*}}
\newcommand{\EEAS}{\end{eqnarray*}}
\newcommand{\BEA}{\begin{eqnarray}}
\newcommand{\EEA}{\end{eqnarray}}
\newcommand{\BEQ}{\begin{eqnarray}}
\newcommand{\EEQ}{\end{eqnarray}}
\newcommand{\BIT}{\begin{itemize}}
\newcommand{\EIT}{\end{itemize}}
\newcommand{\BNUM}{\begin{enumerate}}
\newcommand{\ENUM}{\end{enumerate}}
\newcommand{\BA}{\begin{array}}
\newcommand{\EA}{\end{array}}
\newcommand{\ones}{\mathbf 1}
\newcommand{\reals}{{\mathbb{R}}}
\newcommand{\argmax}{\mathop{\rm argmax}}
\newcommand{\dif}[1]{\mathrm{d}#1}
\newcolumntype{H}{>{\setbox0=\hbox\bgroup}c<{\egroup}@{}}
\title{\LARGE \bf
A near-optimal maintenance policy for automated DR devices
}
\author{
Carlos Abad and Garud Iyengar
\thanks{Supported in part by DOE grant DE-AR0000235, ONR grant N000140310514, and NSF grant DMS-1016571}%
\thanks{Both authors are with the Industrial Engineering and Operations Research Department, Columbia University,
        New York, NY 10027, USA
        {\tt\small ca2446@columbia.edu,garud@ieor.columbia.edu}}%
}
\begin{document}
\maketitle
\begin{abstract}
  Demand side participation is now widely recognized as being extremely
    critical for satisfying the 
  growing 
  electricity demand   in the US. 
  The primary mechanism for demand management in the US is demand
  response (DR) programs  that  
  attempt to reduce or shift demand by giving incentives to 
  participating customers 
  via price discounts
  or 
  rebate payments. 
  Utilities that offer DR programs rely on automated DR devices (ADRs)
  to automate the response to DR
  signals. The ADRs are faulty; 
  but the working state of the ADR is not directly
  observable -- one can, however, attempt to infer it from the power consumption
  during DR events. The utility loses revenue when a malfunctioning
  ADR does not respond to a DR signal; however, sending a maintenance crew
  to check and reset the ADR 
  also incurs costs. In this paper, we 
  show that
  the  problem of maintaining
  a pool of ADRs using a limited number of maintenance crews
  can be formulated as a restless bandit problem, and that one can compute a
  near-optimal policy for
  this problem using Whittle indices. 
  We  show that
  the  Whittle
  indices can be efficiently computed using a variational
  Bayes procedure even when the
  load-shed magnitude is noisy and when there is a random 
  mismatch
  between the clocks at the utility and at the meter.
  The results of our numerical experiments
  suggest that the Whittle-index based approximate policy is 
  within $3.95$\% 
  of the optimal solution for all reasonably low values of
  the signal-to-noise ratio in the meter readings.
\end{abstract}


\section*{Nomenclature}

\addcontentsline{toc}{section}{Nomenclature}
\subsection*{Indices}
\begin{IEEEdescription}[\IEEEusemathlabelsep\IEEEsetlabelwidth{ }]
\item[$t$] Index of DR events
\item[$k$] Index of discrete points in belief space $[0,1]$
\item[$i$] Index of meter readings during DR event
\item[$j$] Index of samples of meter reading vectors
\end{IEEEdescription}

\subsection*{Constants and parameters}
\begin{IEEEdescription}[\IEEEusemathlabelsep\IEEEsetlabelwidth{ }]
  \item[$\lambda$] Expected dollar savings for utility when ADR is 
  working
  \item[$c$] Cost incurred by utility for sending a repair crew
  \item[$\theta$] Customer compensation for participating in a DR 
  event
  \item[$p$] Prior probability of ADR failure
  \item[$\cS$] ADR state space
  \item[$\gamma_0$] Non-operational ADR state
  \item[$\gamma_1$] Operational ADR state
  \item[$\cA$] Set of actions available to utility before a DR event
  \item[$\alpha_0$] Do-nothing action
  \item[$\alpha_1$] Send-crew-to-reset-ADR action
  \item[$\cX$] Set of possible meter reading vectors during DR event
  \item[$\beta$] Discount factor between DR events
  \item[$n$] Discretization size of belief space $[0,1]$
  \item[$D$] Number of ADRs managed by utility
  \item[$M$] Number of repair crews overseeing ADRs
  \item[$m$] Number of meter readings during DR event
  \item[$N$] Number of samples of meter reading vectors
  \item[$\by$] Vector of estimated normal power consumption
  \item[$\sigma$] Standard deviation of load estimation residuals
  \item[$\nu_0$] Expected load-shed during a DR event
  \item[$\eta_0$] Load-shed precision (inverse of variance)
  \item[$d$] Maximum absolute meter-utility clock mismatch
\end{IEEEdescription}

\subsection*{Variables}
\begin{IEEEdescription}[\IEEEusemathlabelsep\IEEEsetlabelwidth{ }]
\item[$s$] State of ADR before DR event
\item[$a$] Action taken by utility before DR event
\item[$\bx$] Vector of meter readings during DR event
\item[$b$] Belief probability that ADR is operational
\item[$\tilde{\varepsilon}$] Load estimation residual
\item[$r$] Actual load-shed during a DR event
\item[$\delta$] Actual mismatch between meter and utility clocks
\item[$\bfomega$] Low-discrepancy sample in unit hypercube
\item[$\bz$] Vector of IID standard normal samples
\end{IEEEdescription}

\subsection*{Functions}
\begin{IEEEdescription}[\IEEEusemathlabelsep\IEEEsetlabelwidth{$P_{ss^\prime}(a)
 \hspace{-6pt}$}]
\item[$R_{as}$] Utility profit when taking action $a$ in ADR state $s$
\item[$P_{ss^\prime}(a)$] ADR transition probability
\item[$Q_{sa}(\bx)$] Meter reading observation conditional probability
\item[$r_{a}(b)$] Expected profit under action $a$ in belief state $b$
\item[$\Gamma_{a\bx}(b)$] Belief probability transition map
\item[$W_{ab}(\bx)$] Up-to-date meter reading observation probability
\item[$V(b)$] POMDP value function
\item[$\Phi(\cdot)$] Standard normal cumulative distribution function
\item[$\varphi(\cdot)$] Standard normal density function
\end{IEEEdescription}

\section{Introduction}
%
Demand Response (DR) refers to a set of activities where end-use customers
change or shift their normal electricity consumption patterns 
in
response to changes in the
price of electricity or other incentive payments 
\cite{FERC}. 
DR has been extensively 
deployed 
over the past several years 
to   
improve electric grid reliability and market efficiency
\cite{wells2004electricity}, and
it is expected to play an even more prominent role
with the planned integration of intermittent renewable generation. 
There are  
three levels of DR
automation: manual DR ---where each equipment controller is manually turned
off; semi-automated DR ---where an individual triggers a preprogramed DR
strategy via a centralized control system; and fully automated DR ---where
the DR strategy is initiated by an automated DR device (ADR) on
receipt of an external communications signal~\cite{piette2009design}.
The utilities offering DR programs prefer to install ADRs since the fully
automated system
reduces the operating costs of DR programs
by increasing DR resource reliability and reducing the amount of effort
required from end-use customers~\cite{openADR}. 

ADRs are faulty~\cite{minnesotapower},\cite{silverspringlegacy}, 
and have to be periodically inspected 
and reset  by
sending a maintenance crew. 
Although newer ADRs are equipped with a
two-way communication that allow the utility to remotely observe the ADR
state, the vast majority of the deployed ADRs 
use one-way communication technology and, therefore, their state is not
directly observable.
According to some estimates, failure to identify
non-functioning ADRs can reduce the effectiveness of DR programs by
approximately 20-30\% and  lead to
lost revenues of the order of
\$ 1.7M for a utiliy with 1M customers
and $10$\% participation rate~\cite{silverspringbusinesscase}. Consequently,
identifying malfunctioning ADRs is of immense economic value for the utilities.
The current practice adopted by utilities is to regularly send a maintenance 
team to inspect and possibly repair the ADRs.
In this paper we propose a method that is able to infer the ADR state from
meter readings, and use the estimates to optimally schedule the ADR
maintenance. We show that our proposed method clearly outperforms 
any regular maintenance schedule, without the need of any new investment in 
additional hardware.
The method we propose can also be used by utilities that have invested in 
two-way communication ADRs to verify whether the reported ADR 
state was accurate. 

As a first step towards solving the ADR repair scheduling problem, we
formulate the maintenance problem for a single ADR using only noisy
meter readings. We assume that an ADR can be in 
either in a \emph{functional} or a \emph{non-functional} state. We assume
that, over a given time, the ADR state transitions from a functional to 
a non-functional state with a known probability. Thus, state
transitions form a Markov chain.   The utility 
decides whether to \emph{send} a crew to reset the device or \emph{do
  nothing}. 
This decision would be simple if the true ADR state was directly 
observable without any errors. In most currently deployed ADRs, the state
is not observable;  it can only be inferred from the 
customer's noisy  electricity consumption. Thus, the ADR maintenance
problem can be modeled as a Markov decision process (MDP) with partially 
observable states~(POMDP). 
The POMDP 
framework incorporates the uncertainty associated with any estimation
process into a ``belief'' probability that the ADR is 
functioning,  and provides a methodology for updating this belief 
as more information becomes available, i.e. more meter readings are recorded 
during new DR events.
POMDPs are, in general, very hard optimization problems
\cite{papadimitriou1987complexity},
\cite{madani1999undecidability}.
However,
it is often
possible to compute the optimal policy for POMDPs with small state, action
and observation spaces, or additional structure. 
We show that the optimal policy in our problem is a single threshold policy 
where it is optimal to send a maintenance crew whenever 
the belief probability drops below the threshold value. 
We show that the optimal threshold can be approximated to any degree
of accuracy by solving a single linear program (LP).

Our model for a single ADR maintenance problem falls in the class of random 
failure 
models. Currently existing alternatives to our approach are empirical predictive 
maintenance routines such as the Reliability-Centered Maintenance, and 
deterioration failure models; see~\cite{endrenyi2001present} for details. 
These methodologies are typically used for managing traditional electric assets 
such as generators and transformers. To the best of our knowledge, there is no 
previous work on probabilistic models for the maintenance of a single ADR, let 
alone the maintenance scheduling of multiple ADRs being overseen by a small 
number of repair teams.

We formulate the problem of maintaining a pool of ADRs using a limited
number of maintenance crews as a 
restless bandit (RB) problem~\cite{whittle1988restless}. The RB
problem is a generalization of the  
multiarmed bandit~(MAB) problem~\cite{gittins1979bandit}.
In the MAB problem, the decision maker chooses a set of
``bandits'' to activate based on the current state information, and the state of
the chosen bandits evolves according to a known distribution; however,
the states of all the inactive bandits remain fixed.
 Gittins
\cite{gittins1979bandit} constructed a set of index functions that map the
state of a bandit to a real number, and showed that the optimal solution for
the MAB problem is to select the
bandits in the order determined by the index function. 
In the RB setting, the
states of the inactive bandits also evolve. In the ADR
maintenance setting, 
an 
active ``bandit'' corresponds to an ADR that is being inspected and possibly repaired,
and an inactive ``bandit'' corresponds to an ADR that is not
being inspected; clearly, the state of inactive ADRs continue
to evolve according to its failure
distribution.
Computing optimal policies for the RB problem is hard
\cite{papadimitriou1999complexity}, but a generalization of Gittins' indices can be
used for constructing approximately optimal and very efficiently implementable 
policies for  a number of applications.
Whittle \cite{whittle1988restless}
established conditions under which one can define generalized Gittins'
indices for the RB problem. We will refer to such RB problems as
Whittle-indexable.  
Glazebrook et al. \cite{glazebrook2006some} have established that
machine maintenance models
that are either monotone
or are breakdown/deterioration models,
are Whittle-indexable. 
The ADR repair scheduling problem
is neither  monotone nor a
breakdown/deterioration model; therefore, the results
in~\cite{glazebrook2006some} do not extend to this model.
We establish that the ADR repair scheduling problem is Whittle
indexable,
and show that the Whittle indices
can be computed
by solving a sequence of single ADR maintenance problems.

Finally, we conduct an extensive numerical study where we explore several additional
practical issues such as the impact on performance when the utility and meter
clock are not synchronous, and the impact of uncertain DR load
shed. Our proposed variational Bayes procedure to handle these issues is
of independent interest. 
The results of our
numerical experiments suggest that, for reasonable values of the 
signal-to-noise ratio in the meter readings, the Whittle-index policy is 
within $3.95$\% of the optimal policy.

\section{ADR maintenance problem}
\label{sec:maintenance}
In this section we investigate the maintenance problem for a single ADR. The
solution to this problem will be used to solve a relaxation of the multiple 
ADRs repair scheduling problem in Section~\ref{sec:scheduling}.


For $t \geq 1$, let $s_t \in \cS$ denote
the ADR state just prior to the $t$-th DR event, let $a_t \in \cA$ denote
the action  taken just prior to the $t$-th DR
event, and let $\bx_t$ denote the vector of meter readings recorded during
the $t$-th DR event. 
During DR event $t$, the utility receives a profit $R_{a_t s_t}$. The profit 
matrix $\bR$ is given by 
\begin{equation}
\bR =  [R_{as}]_{a \in \cA, s \in \cS} = \bordermatrix{    
  & \gamma_0 & \gamma_1 \cr
  \alpha_0 & -\theta & \lambda - \theta \cr
  \alpha_1 & \lambda - \theta - c & \lambda - \theta - c }
\end{equation}
For ease of notation, and w.l.o.g., we will assume that $\theta = 0$.
We assume that an ADR that was functioning during the $t$-th event, i.e. $s_t
= \gamma_1$, fails before the $(t+1)$-th DR event, i.e. $s_{t+1} =
\gamma_0$, with probability $p$. Thus, under the do-nothing action
$\alpha_0$, the state transition matrix
\begin{equation}
\bP(\alpha_0) = [P_{ss^\prime}(\alpha_0)]_{ss^\prime \in \cS} = 
\bordermatrix{    
  & \gamma_0 & \gamma_1 \cr
  \gamma_0 & 1 & 0 \cr
  \gamma_1 & p & 1-p }.
\end{equation}
Since the action $\alpha_1$ resets the ADR state to $\gamma_1$ just prior
to the $t$-th DR event, it follows that the state transition matrix 
\begin{equation}
\bP(\alpha_1) = [P_{ss^\prime}(\alpha_1)]_{ss^\prime \in \cS} 
=\bordermatrix{    & \gamma_0 & \gamma_1 \cr
  \gamma_0 & p & 1-p \cr
  \gamma_1 & p & 1-p }.
\end{equation}
We assume that 
the true ADR state can only be determined by 
sending a maintenance crew to inspect it.
However, the utility can use the the vector $\bx$ of metered power consumption 
to infer the state of the ADR.
Let 
\begin{equation}
Q_{sa}(\bx) = \mP(\bx_{t} = \bx \mid a_t=a, s_{t} = s)
\end{equation}
denote the conditional probability of observing the vector of meter readings 
$\bx_{t} = \bx$
during the $t$-th DR event 
when action $a_t = a$, 
and 
the ADR state 
$s_{t} = s$.
The tuple $(\cS,\cA, \cX, \bP, \bR, \bQ)$ is a partially observable Markov
decision process (POMDP) that completely describes the ADR maintenance
problem. Note that we deviate from the  standard definition of POMDP (see,
e.g.~\cite{braziunas2003pomdp}) wherein the observation $\bx_t$ is a function of the
current state $s_t$ and the \emph{previous} action $a_{t-1}$. We do so in
order to keep the dynamics in the problem more transparent.
Figure~\ref{fig:dynamics} describes the causal relationships between
states, actions, rewards, and observations in  the ADR maintenance
problem. 

\begin{figure}
	\centering
	\begin{subfigure}[t]{.2\textwidth}
	  \includegraphics[width=\textwidth]{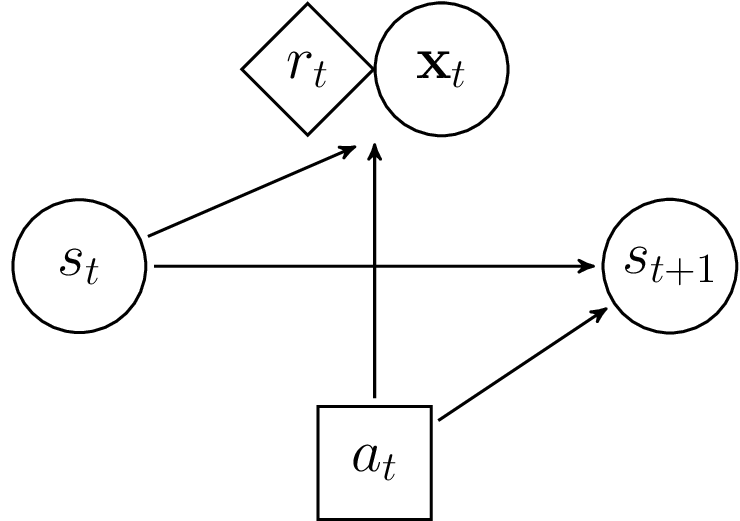}
	  \caption{Partially observable MDP}
	\end{subfigure}	 
	\begin{subfigure}[t]{.2\textwidth}
	  \includegraphics[width=\textwidth]{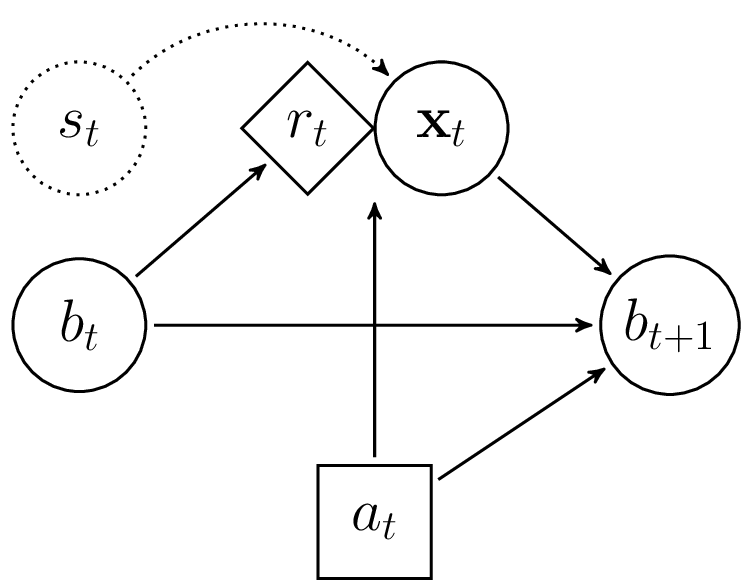}
	  \caption{Belief state MDP}
	\end{subfigure}	
	\caption{ADR maintenance problem causal relationships}
	\label{fig:dynamics}
\end{figure}

It is well-known~\cite{aastrom1965optimal} that a POMDP is equivalent to a 
belief state MDP. 
Let $h_t = \{a_1, \bx_1, a_2, \bx_2, \ldots, a_{t-1}, \bx_{t-1}\}$ denote the
observed history before the $t$-th DR event. Then,
\begin{equation}
b_t = \mP(s_t = \gamma_1 \mid h_t) \in \cB = [0,1]
\end{equation}
denotes the belief probability that the ADR is 
operational during the
$t$-th  DR event. 
The expected profit $\br(b)$ in belief state $b \in \cB$ is given by
\begin{equation}
\br(b) = [r_{\alpha_0}(b), r_{\alpha_1}(b)]^\top = \bR \; [1-b,  b]^\top
= [\lambda b, \lambda - c]^\top.
\end{equation}
Note that the updated belief state $b_{t+1}$ 
after observing 
the meter readings
$\bx_t$ 
is 
$b_{t+1}  =  \mP(s_{t+1} = \gamma_1 \mid  h_t, a_t = a, \bx_t =
  \bx)$.
Using Bayes' rule and the causality structure in
Figure~\ref{fig:dynamics}, one can show that $b_{t+1} = \Gamma_{a_t\bx_t}(b_t)$,
where the map
\begin{equation}
\Gamma_{a\bx}(b)
=
\frac{P_{\gamma_1\gamma_1}(a) Q_{\gamma_1a}(\bx) b +
  P_{\gamma_0\gamma_1}(a) Q_{\gamma_0a}(\bx) (1-b)}
{W_{ab}(\bx)} \label{eq:Gamma-def},
\end{equation} 
and
$
W_{ab}(\bx) = \mP(\bx_t = \bx \mid  h_t,~a_t = a) = Q_{\gamma_1a}(\bx)b+
  Q_{\gamma_0a}(\bx)(1-b)
$. The belief transition \eqref{eq:Gamma-def} is standard for POMDPs; see, e.g.
\cite{monahan1982state} for details.
The tuple $(\cB, \cA, \Gamma,
\br)$ is the belief-state MDP that represents the ADR maintenance problem.

\subsection{Optimal policy}
Our goal is to compute a policy $\pi^\ast:\cB \rightarrow \cA$ that maximizes the
total expected discounted profit
\begin{equation}
V^\pi(b_1) = \sum_{t=1}^\infty \beta^{t-1} r_{a_t^\pi}(b_t),
\end{equation}
where  $\beta \in (0,1)$ is a
discount factor and $a_t^\pi$ denotes the action taken by policy $\pi$ in
belief state $b_t$ prior to the $t$-th DR event. 
It is well known~\cite{sondik1978optimal} that there exists a stationary optimal
policy $\pi^\ast \in \argmax_{\pi} V^\pi(b_1)$, and that the associated optimal
value function $V^{\pi^\ast}(b)$, satisfies the Bellman equation: 
\begin{equation}
  V(b) = \max_{a \in \cA} \bigg\{r_a(b) +
   \beta \mE_\bx \big[V(\Gamma_{a
    \bx}(b))\big]\bigg\}, 
  \label{eq:bellman}
\end{equation}
where $\mE_\bx \left[V(\Gamma_{a \bx}(b))\right] = \int_{\bx \in \cX}
V(\Gamma_{a \bx}(b)) W_{a b}(\bx) \dif{\bx}$ denotes the expected future profit. 
From 
\eqref{eq:bellman} it follows that action
$\alpha_1$ is optimal for belief state $b$ 
if the function
\begin{align}
  f(b) & :=  r_{\alpha_0}(b) - r_{\alpha_1}(b) + \beta \mE_\bx \left[V(\Gamma_{\alpha_0
    \bx}(b)) \right] \nonumber \\ 
& \hspace*{0.2in} - \beta \mE_\bx \left[V(\Gamma_{\alpha_1 \bx}(b)) \right] \leq 0.
\end{align}
We 
use the following result to show that the optimal
policy is a single threshold policy. 
\begin{lemma}
  \label{lm:fconvex}
  The function $f$ is convex in $[0,1]$.
\end{lemma}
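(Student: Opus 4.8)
The plan is to reduce convexity of $f$ to convexity of the value function $V$, and then establish the latter by the standard value-iteration argument. First I would rewrite $f$ in a more tractable form. Since $r_{\alpha_0}(b) = \lambda b$ and $r_{\alpha_1}(b) = \lambda - c$ are both affine in $b$, the term $r_{\alpha_0}(b) - r_{\alpha_1}(b)$ is affine and contributes nothing to convexity. Next, observe that under action $\alpha_1$ the transition matrix $\bP(\alpha_1)$ has identical rows, so $\Gamma_{\alpha_1 \bx}(b)$ and $W_{\alpha_1 b}(\bx)$ do not depend on $b$; hence $\mE_\bx[V(\Gamma_{\alpha_1 \bx}(b))]$ is a constant in $b$ and also contributes nothing. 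Therefore it suffices to show that
\[
b \;\longmapsto\; \mE_\bx\big[V(\Gamma_{\alpha_0 \bx}(b))\big] \;=\; \int_{\bx \in \cX} V\big(\Gamma_{\alpha_0 \bx}(b)\big)\, W_{\alpha_0 b}(\bx)\, \dif{\bx}
\]
is convex in $b$.

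The key identity is that $V(\Gamma_{\alpha_0\bx}(b))\, W_{\alpha_0 b}(\bx)$ has a useful structure. Writing $g(b) := V(\Gamma_{\alpha_0 \bx}(b)) W_{\alpha_0 b}(\bx)$ for a fixed $\bx$, note from \eqref{eq:Gamma-def} that the argument of $V$ is $N(b)/W_{\alpha_0 b}(\bx)$ where $N(b) = P_{\gamma_1\gamma_1}(\alpha_0)Q_{\gamma_1\alpha_0}(\bx) b + P_{\gamma_0\gamma_1}(\alpha_0)Q_{\gamma_0\alpha_0}(\bx)(1-b)$ is affine in $b$, and $W_{\alpha_0 b}(\bx)$ is also affine in $b$. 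Thus $g(b) = W_{\alpha_0 b}(\bx)\, V\!\big(N(b)/W_{\alpha_0 b}(\bx)\big)$ is the perspective-type composition of the convex function $V$ with affine maps; the standard fact is that if $V$ is convex then $w\, V(n/w)$ is jointly convex in $(n,w)$ for $w>0$, and composing with the affine map $b \mapsto (N(b), W_{\alpha_0 b}(\bx))$ preserves convexity. Integrating over $\bx$ (a nonnegative combination / integral of convex functions) then yields convexity of $\mE_\bx[V(\Gamma_{\alpha_0\bx}(b))]$, and hence of $f$.

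It remains to prove that $V$ itself is convex on $[0,1]$. The plan is the usual induction on the value-iteration operator $T$ defined by $(TU)(b) = \max_{a}\{r_a(b) + \beta \mE_\bx[U(\Gamma_{a\bx}(b))]\}$. The zero function is convex; and the argument of the previous paragraph shows that if $U$ is convex then $b \mapsto \mE_\bx[U(\Gamma_{a\bx}(b))]$ is convex for each $a$ (the perspective argument used $V$ only through its convexity), so $r_a(b) + \beta\,\mE_\bx[U(\Gamma_{a\bx}(b))]$ is convex for each $a$, and the pointwise maximum over the finite action set $\cA$ is convex. Hence $T$ maps convex functions to convex functions. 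Since $\beta \in (0,1)$, the value-iteration iterates $T^k 0$ converge uniformly to $V$, and a uniform (indeed pointwise) limit of convex functions is convex; therefore $V$ is convex, completing the proof.

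The main obstacle is the perspective/measure-theoretic step: one must justify that $w\,V(n/w)$ is convex down to $w=0$ (the relevant closure of the perspective function), that the affine map $b\mapsto(N(b),W_{\alpha_0 b}(\bx))$ keeps $w=W_{\alpha_0 b}(\bx)$ in the valid range for all $b\in[0,1]$, and that convexity survives the integral $\int_{\bx\in\cX}(\cdot)\,\dif{\bx}$ — which is immediate pointwise since each integrand is convex in $b$ and the integral is over a fixed measure, but should be stated carefully together with the finiteness/integrability of $V$ that makes $\mE_\bx[V(\cdot)]$ well-defined.
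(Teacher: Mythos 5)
Your proof is correct and follows essentially the same route as the paper's: dispose of the affine reward difference, note that the $\alpha_1$ term is constant in $b$ because $\Gamma_{\alpha_1\bx}(b)=1-p$, and then show $b\mapsto V(\Gamma_{\alpha_0\bx}(b))\,W_{\alpha_0 b}(\bx)$ is the perspective of the convex $V$ composed with affine maps of $b$, so that integrating over $\bx$ preserves convexity. The only difference is that you prove the convexity of $V$ from scratch via value iteration, whereas the paper simply cites Sondik (1978) for that fact; your added care about the closure of the perspective at $w=0$ and integrability is a reasonable refinement but not a change of method.
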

\begin{proof}
  Note that $r_{\alpha_0}(b) = \lambda b$ is an affine function of $b$ and
  $r_{\alpha_1}(b) = \lambda - c$ is independent of $b$. It follows that
  $r_{\alpha_0}(b) - r_{\alpha_1}(b)$ is an affine and, therefore,
  convex function of $b$.
  Also, since  $\Gamma_{\alpha_1 \bx}(b) = 1-p$, the
  last term $\mE_\bx \left[V(\Gamma_{\alpha_1 \bx}(b)) 
\right]$ in $f(b)$
  is, in fact, independent of $b$. 

  The value function $V$ of an infinite
  horizon MDP is convex on $[0,1]$ \cite{sondik1978optimal}. Then,
  the
  perspective function $g(v,u) = uV(v/u)$ is convex on $\{(v,u) : v/u
  \in  [0,1], v \in \reals_+\}$~\cite{boyd2009convex}.
  Since 
  $W_{\alpha_0b}(\bx) = Q_{\gamma_1\alpha_0}(\bx)b+
  Q_{\gamma_0\alpha_0}(\bx)(1-b)$
  and
	\begin{equation}
  \Gamma_{\alpha_0 \bx}(b) = \frac{(1-p)Q_{\gamma_1\alpha_0}(\bx)
    b}{W_{\alpha_0 \bx}(b)}, 
	\end{equation}
  it follows that 
  \begin{eqnarray}
  \lefteqn{g_{\bx}(b) := V(\Gamma_{\alpha_0 \bx}(b)) W_{\alpha_0 \bx}(b)} \\
  & = & 
  g\Big((1-p)Q_{\gamma_1\alpha_0}(\bx) b, \; Q_{\gamma_1\alpha_0}(\bx)b+
  Q_{\gamma_0\alpha_0}(\bx)(1-b)\Big) \nonumber
\end{eqnarray}
is a convex function of $b$ for each $\bx \in \cX$.
  Hence, $\mE_\bx \left[V(\Gamma_{\alpha_0
      \bx}(b))\right] = \int_{\bx \in \cX}
g_{\bx}(b) \dif{\bx}$ 
  is a convex function of
  $b$.
 \end{proof}

\begin{theorem}
  \label{thm:threshold-opt}
The optimal policy $\pi^\ast$ for the ADR maintenance problem is a
threshold policy, i.e. there exists $b^\ast \in \reals$ such that 
\begin{equation}
  a_t^{\pi^\ast} = \left\{\begin{array}{rl}
      \alpha_1 & \text{if } b_t \leq b^\ast \\
      \alpha_0 & \text{if } b_t > b^\ast. \end{array}\right.
  \label{eq:policy}
\end{equation}
\end{theorem}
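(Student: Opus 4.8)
From the Bellman equation~\eqref{eq:bellman} and the definition of $f$ given just before Lemma~\ref{lm:fconvex}, action $\alpha_1$ is optimal in belief state $b$ exactly when $f(b)\le 0$. So the theorem reduces to the claim that the sublevel set $\cZ:=\{\,b\in[0,1]:f(b)\le 0\,\}$ is either empty (take any $b^\ast<0$) or a closed interval of the form $[0,b^\ast]$ with $b^\ast<1$; in either case~\eqref{eq:policy} follows. Lemma~\ref{lm:fconvex} already supplies two pieces: $\cZ$, being a sublevel set of a convex function on $[0,1]$, is an interval, and $f$ is continuous. Moreover a direct evaluation at $b=1$ is useful: since $\Gamma_{\alpha_0\bx}(1)=\Gamma_{\alpha_1\bx}(1)=1-p$, the two value-function terms in $f(1)$ cancel, leaving $f(1)=r_{\alpha_0}(1)-r_{\alpha_1}(1)=\lambda-(\lambda-c)=c>0$; hence $1\notin\cZ$ and the degenerate case $\cZ=[0,1]$ cannot occur. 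What remains is to show that the left endpoint of the interval $\cZ$ is $0$, i.e. that $f(b)\le 0\Rightarrow f(b')\le 0$ for every $b'<b$.

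The plan is to get this from the stronger statement that $f$ is \emph{nondecreasing} on $[0,1]$. Split $f$ as in the proof of Lemma~\ref{lm:fconvex}: the term $r_{\alpha_0}(b)-r_{\alpha_1}(b)=\lambda b-\lambda+c$ is affine with slope $\lambda\ge 0$, hence nondecreasing; the term $-\beta\,\mE_\bx[V(\Gamma_{\alpha_1\bx}(b))]$ is constant because $\Gamma_{\alpha_1\bx}(b)\equiv 1-p$; so everything reduces to showing that $b\mapsto\mE_\bx[V(\Gamma_{\alpha_0\bx}(b))]=\int_\cX V(\Gamma_{\alpha_0\bx}(b))\,W_{\alpha_0 b}(\bx)\,\dif{\bx}$ is nondecreasing. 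I would split this into two ingredients. First, a \emph{monotone Bayesian updating} lemma: for every nondecreasing $\psi:[0,1]\to\reals$, the map $b\mapsto\mE_\bx[\psi(\Gamma_{\alpha_0\bx}(b))]$ is nondecreasing. One proof is a short computation: $b\mapsto\Gamma_{\alpha_0\bx}(b)$ is nondecreasing for each fixed $\bx$ (differentiate~\eqref{eq:Gamma-def}; the derivative equals $(1-p)Q_{\gamma_1\alpha_0}(\bx)Q_{\gamma_0\alpha_0}(\bx)/W_{\alpha_0 b}(\bx)^{2}\ge 0$), and, writing $\tilde b:=\Gamma_{\alpha_0\bx}(b)/(1-p)$ and $W_{\alpha_0 b}(\bx)=Q_{\gamma_1\alpha_0}(\bx)b+Q_{\gamma_0\alpha_0}(\bx)(1-b)$, differentiating the integral in $b$ rearranges $\tfrac{d}{db}\mE_\bx[\psi(\Gamma_{\alpha_0\bx}(b))]$ into $\tfrac{1}{b(1-b)}$ times the sum of $(1-p)\,\mE_\bx[\psi'(\Gamma_{\alpha_0\bx}(b))\,\tilde b(1-\tilde b)]\ge 0$ and $\mathrm{Cov}_\bx(\psi(\Gamma_{\alpha_0\bx}(b)),\tilde b)\ge 0$, the covariance being nonnegative by Chebyshev's association inequality since $\mE_\bx[\tilde b]=b$ and $\psi(\Gamma_{\alpha_0\bx}(b))$ is a nondecreasing function of $\tilde b$. (Equivalently: the posterior is stochastically increasing in the prior.) Second, that $V$ is nondecreasing on $[0,1]$: by value iteration from $V_0\equiv 0$, $V_{n+1}(b)=\max\{\lambda b+\beta\,\mE_\bx[V_n(\Gamma_{\alpha_0\bx}(b))],\ \lambda-c+\beta V_n(1-p)\}$, where the $\alpha_1$ branch is constant in $b$ and the $\alpha_0$ branch is nondecreasing by the first ingredient applied with $\psi=V_n$; a pointwise maximum of nondecreasing functions is nondecreasing and $V_n\to V$, so $V$ is nondecreasing. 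Applying the first ingredient once more with $\psi=V$ shows $\mE_\bx[V(\Gamma_{\alpha_0\bx}(b))]$ is nondecreasing, hence $f$ is nondecreasing; combined with Lemma~\ref{lm:fconvex} this makes $\cZ$ a closed interval with left endpoint $0$, which is exactly~\eqref{eq:policy} with $b^\ast=\max\cZ$.

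The step I expect to be the real obstacle is the monotone-updating ingredient together with the induction that $V$ is nondecreasing; once the belief transition is shown to be monotone in the prior, the rest (the affine and constant pieces of $f$, the boundary value $f(1)=c$, and assembling the interval) is routine bookkeeping. I would also note that if one prefers to lean entirely on Lemma~\ref{lm:fconvex}, it suffices to prove only that $f(0)\le 0$ in addition to $f(1)>0$: a convex function on $[0,1]$ that is $\le 0$ at $0$ and $>0$ at $1$ has sublevel set exactly $[0,\max\cZ]$ by the chord inequality, and $f(0)=c-\lambda+\beta(V(0)-V(1-p))\le 0$ follows from $V$ being nondecreasing whenever $\lambda\ge c$ (the economically relevant regime).
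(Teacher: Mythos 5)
Your proof is correct, but it takes a genuinely different route from the paper's. The paper stays entirely within Lemma~\ref{lm:fconvex}: convexity makes $I=\{b: f(b)\le 0\}$ an interval, and a short contradiction argument pins its left endpoint at $0$ --- if $\alpha_0$ were strictly optimal at $b=0$ then $V(0)=0$ (since $\Gamma_{\alpha_0\bx}(0)=0$ and $r_{\alpha_0}(0)=0$), while the repair branch $r_{\alpha_1}(b)+\beta\,\mE_\bx[V(\Gamma_{\alpha_1\bx}(b))]=\lambda-c+\beta V(1-p)$ is the \emph{same constant} at every $b$, so it would be dominated by $V(b)\ge 0$ everywhere and $I$ would be empty, contradicting nonemptiness. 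No monotonicity of $V$ or of the Bayes update is ever invoked. You instead prove the stronger statement that $f$ is nondecreasing, which needs two ingredients the paper never establishes: that the posterior $\Gamma_{\alpha_0\bx}(b)$ under $W_{\alpha_0 b}$ is stochastically increasing in the prior $b$ (your covariance computation checks out: with $\tilde b = Q_{\gamma_1\alpha_0}(\bx)b/W_{\alpha_0 b}(\bx)$ one indeed has $\mE_\bx[\tilde b]=b$, $Q_{\gamma_1\alpha_0}(\bx)-Q_{\gamma_0\alpha_0}(\bx)=W_{\alpha_0 b}(\bx)(\tilde b-b)/(b(1-b))$, and the association inequality applies), and that $V$ is nondecreasing, via value iteration. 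What your route buys is that Corollary~\ref{cor:f-monotone} --- all level sets $\{b: f(b)\le-\mu\}$ are of the form $[0,\xi]$, hence the indexability in Theorem~\ref{thm:indexable} --- falls out immediately from monotonicity of $f$, and you also get the clean boundary fact $f(1)=c>0$, so $b^\ast<1$; convexity becomes unnecessary for the theorem. What the paper's route buys is economy: no stochastic-monotonicity machinery and no differentiability issues (your first derivative term uses $\psi'$, so for $\psi=V_n$ you should either invoke a.e.\ differentiability of the convex $V_n$ or restate the updating lemma as first-order stochastic dominance). One quibble with your closing aside: the regime $\lambda\ge c$ is not the paper's (it sets $c=3\lambda$), so the shortcut $f(0)\le c-\lambda\le 0$ does not apply there; keep the main argument, which correctly allows $I=\emptyset$.
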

\begin{proof}
  From Lemma \ref{lm:fconvex}, we have that $f(b)$ is
  convex. Consequently, the set $I = \{b: f(b) \leq 0\}$ of belief states
  for  which action $\alpha_1$ is optimal is a closed, possibly empty,
  interval. Suppose $I$ is empty. Then the optimal
  policy is of the form (\ref{eq:policy}) with $b^\ast<0$.

  Next, suppose $I$ is non-empty. To establish the result, it suffices to show that $0 \in
  I$. Suppose not, i.e. $\alpha_0$ is strictly optimal for $b = 0$. Then,
  \begin{equation}
  V(0) = r_{\alpha_0}(0) + \beta \mE_{\bx}[V(\Gamma_{\alpha_0\bx}(0))] = 0
  + \beta V(0), 
	\end{equation}
  where we use fact that $\Gamma_{\alpha_0\bx}(0) = 0$. It follows
  that $V(0) = 0$. Moreover,
  \begin{equation}
  0 = V(0) > r_{\alpha_1}(0) + \beta \mE[V(\Gamma_{\alpha_1\bx}(0))] =
  \lambda - c + \beta V(1-p).
  \end{equation}
  Then, for any belief state $b$,
  \begin{equation}
  r_{\alpha_1}(b) + \beta \mE[V(\Gamma_{\alpha_1\bx}(b))] =
  \lambda - c + \beta V(1-p) < 0.
  \end{equation}
  Since the payoff from action $\alpha_0$ is non-negative in
  any belief state, we have that $V(b) \geq 0$ for all $b \in \cB$.
  It follows that $V(b) = r_{\alpha_0}(b) +
  \beta\mE_{\bx}[V(\Gamma_{\alpha_0\bx}(b))]$, and $\alpha_0$ is the
  unique optimal action 
  for all belief states. Thus, we have established that the
  interval $I$ is empty; a contradiction.
	Hence, $0 \in I$ and the optimal policy is of the form
(\ref{eq:policy}) with $b^\ast \in [0,1]$.
\end{proof}
We conclude this section 
with the following corollary.
\begin{corollary}
  \label{cor:f-monotone}
  The
  level sets $\{b: f(b) \leq -\mu\}$ of the function $f$
  are all of the form $[0,\xi]$, 
where we use the convention that $[0, \xi] = \emptyset$ if $\xi 
< 0$.
\end{corollary}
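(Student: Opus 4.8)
The plan is to upgrade Theorem~\ref{thm:threshold-opt} --- which showed that $\{b:f(b)\le 0\}$ has the form $[0,b^\ast]$ --- to the same statement for every level $-\mu$. By Lemma~\ref{lm:fconvex} the function $f$ is convex, and hence continuous, on $[0,1]$, so every set $\{b\in[0,1]:f(b)\le -\mu\}$ is automatically a closed subinterval of $[0,1]$; the real content of the corollary is that its left endpoint is always $0$. It therefore suffices to prove that $f$ is non-decreasing on $[0,1]$, and, $f$ being convex, this reduces to the single fact that $f$ attains its minimum over $[0,1]$ at $b=0$: a convex function minimized at the left endpoint of an interval is non-decreasing on that interval, since any interior point is a convex combination of $0$ and a point lying further to the right.

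The crux is thus to establish $f(b)\ge f(0)$ for all $b\in[0,1]$, and the main obstacle is that this cannot be done term by term --- without structural assumptions on the observation kernel $\bQ$, the map $b\mapsto\mE_\bx\!\left[V(\Gamma_{\alpha_0\bx}(b))\right]$ need not be monotone. I would instead argue through the value function, by showing that $b=0$ is a global minimizer of $V$ itself. On the one hand, $V(b)\ge 0$ for every $b$ (exactly as in the proof of Theorem~\ref{thm:threshold-opt}); and since action $\alpha_1$ is always available and $\Gamma_{\alpha_1\bx}(b)=1-p$ is independent of $b$ and $\bx$, we also have $V(b)\ge r_{\alpha_1}(b)+\beta\mE_\bx\!\left[V(\Gamma_{\alpha_1\bx}(b))\right]=\lambda-c+\beta V(1-p)=:\bar v$, a constant; hence $V(b)\ge\max\{0,\bar v\}$ for every $b$. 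On the other hand, because $\Gamma_{\alpha_0\bx}(0)=0$ for every $\bx$, evaluating the Bellman equation~\eqref{eq:bellman} at $b=0$ gives $V(0)=\max\{\beta V(0),\,\bar v\}$, whose unique solution is $V(0)=\max\{0,\bar v\}$. Combining these, $V(0)=\max\{0,\bar v\}\le V(b)$ for all $b\in[0,1]$.

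Given this, the corollary follows quickly. Writing $f(b)=\lambda b-(\lambda-c)+\beta\mE_\bx\!\left[V(\Gamma_{\alpha_0\bx}(b))\right]-\beta V(1-p)$ and subtracting its value at $b=0$ --- where $\Gamma_{\alpha_0\bx}(0)=0$ forces $\mE_\bx[V(\Gamma_{\alpha_0\bx}(0))]=V(0)$ --- yields $f(b)-f(0)=\lambda b+\beta\big(\mE_\bx[V(\Gamma_{\alpha_0\bx}(b))]-V(0)\big)\ge 0$, since $\lambda\ge 0$ and, by the previous paragraph, $V(\Gamma_{\alpha_0\bx}(b))\ge V(0)$ for every $\bx$. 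Thus $f$ is minimized at $b=0$, and combining this with its convexity (Lemma~\ref{lm:fconvex}) shows $f$ is non-decreasing on $[0,1]$. Consequently $\{b:f(b)\le -\mu\}$ is empty when $f(0)>-\mu$ and equals $\big[\,0,\ \max\{b:f(b)\le -\mu\}\,\big]$ otherwise --- precisely the asserted form, under the stated convention that $[0,\xi]=\emptyset$ for $\xi<0$.
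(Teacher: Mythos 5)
Your proof is correct, but it follows a genuinely different route from the paper's. The paper disposes of the corollary in three sentences: it reinterprets $\{b: f(b)\le -\mu\}$ as the set of belief states where $\alpha_1$ is optimal in the perturbed problem in which the do-nothing reward is increased by $\mu$, and then re-runs the argument of Theorem~\ref{thm:threshold-opt} on that perturbed problem to conclude the set is an interval anchored at $0$. You instead prove the literal statement about the fixed function $f$ by showing $f$ is non-decreasing: convexity (Lemma~\ref{lm:fconvex}) plus the fact that $b=0$ minimizes $f$, which you reduce to $b=0$ minimizing $V$ via the two lower bounds $V\ge 0$ and $V\ge \lambda-c+\beta V(1-p)$ together with the fixed-point identity $V(0)=\max\{0,\lambda-c+\beta V(1-p)\}$; the pointwise inequality $V(\Gamma_{\alpha_0\bx}(b))\ge V(0)$ then gives $f(b)-f(0)\ge\lambda b\ge 0$. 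Each approach buys something. The paper's version is shorter and directly sets up the identification $[0,b^\ast(\mu)]=\{b:f(b)\le-\mu\}$ that Theorem~\ref{thm:indexable} actually invokes; but it implicitly treats the advantage function of the subsidy-$\mu$ problem as $f+\mu$, which is not literally true because the subsidy also changes the value function inside the expectations, so some care is being elided there. Your argument is self-contained, never needs that identification, and delivers the stronger and reusable fact that $f$ is monotone on $[0,1]$ --- though, for the same reason, it does not by itself supply the link between the sublevel sets of the original $f$ and the thresholds $b^\ast(\mu)$ of the subsidy problems on which the indexability proof relies. One small point of hygiene: convexity guarantees continuity only on the open interval $(0,1)$, so ``convex, hence continuous on $[0,1]$'' deserves a word; the left endpoint is covered by your minimum-at-$0$ argument, and at $b=1$ continuity of $f$ follows from continuity of the discounted value function $V$ and dominated convergence rather than from convexity alone.
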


\begin{proof}
Consider an ADR maintenance problem where 
the reward associated with the do-nothing action $\alpha_0$ is increased by  an
amount $\mu$. Then the 
interval over which it is optimal to take action $\alpha_1$ is given by 
$\{b: f(b)  + \mu \leq 0 \}$.
From the proof of Theorem \ref{thm:threshold-opt}, it follows that the set
$\{b: f(b)  \leq -\mu\}$ is of the  form $[0, 
\xi]$. 
\end{proof}
We use Corollary~\ref{cor:f-monotone} in Section \ref{sec:scheduling} to establish that
the ADR repair scheduling problem 
is Whittle-indexable~\cite{glazebrook2006some} and, therefore, that
there exists a well-defined heuristic policy for approximately solving it.

\subsection{Approximating the optimal threshold}
In this section, we describe a numerical scheme to approximate the optimal
threshold $b^\ast$.  
We discretize the space $\cB = [0,1]$ into $n+1$ equally spaced points
$\hat{\cB} = \big\{\sfrac{k}{n} : k = 0, \dots, n\big\}$, and round up
$b \in \big(\sfrac{(k-1)}{n}, 
  \sfrac{k}{n}\big]$ to the point $\sfrac{k}{n}$.  
Formally, we consider the MDP with state space
$\hat{\cB}$, action space $\cA$, reward function $\br\big|_{\hat{\cB}}$
and state transition $\hat{\Gamma}_{a\bx}(k) := \lceil
\Gamma_{a\bx}(\sfrac{k}{n})\rceil$. 
Let $\hat{\bV} = (\hat{V}(0), \ldots, \hat{V}(k), \ldots, \hat{V}(n))^\top$ 
denote the value
function vector for the $n+1$ states $\sfrac{k}{n} \in \hat{\cB}$. Then
\begin{equation}
  \hat{V}(k)  =  \max \bigg\{ 
    \begin{array}{l}
      r_{\alpha_0}(\sfrac{k}{n}) +
      \beta \mE_{\bx}[\hat{V}(\hat{\Gamma}_{\alpha_0\bx}(k)],\\
      r_{\alpha_1} ( \sfrac{k}{n} ) + \beta
      \mE_{\bx}[\hat{V}(\hat{\Gamma}_{\alpha_1\bx}(k)]
    \end{array} \bigg\}.
 \label{eq:valuefnapprox}
\end{equation}
It is well-known~\cite{puterman2009markov} that the vector $\hat{\bV}$ can be 
computed by solving
the LP: 
\begin{equation}
  \label{eq:threshold-lp}
  \begin{array}[t]{rll}
    \min\limits_{\bV} 	& \sum\limits_{k = 0}^{n} V(k) \\
    \text{s.t} 	& V(k)  \geq r_{\alpha_0}(\sfrac{k}{n}) + 
    \beta  \mE_{\bx}[V(\hat{\Gamma}_{\alpha_0\bx}(k))], &
    \forall k,\\ 
    & V(k) \geq r_{\alpha_1}(\sfrac{k}{n}) + 
      \beta V(\lceil(1-p)n\rceil), &   \forall k,
    \end{array}
\end{equation}
where we use the fact that $\hat{\Gamma}_{\alpha_1\bx}(k) = \lceil
(1-p)n \rceil $. LP~\eqref{eq:threshold-lp} consists of $n$
variables and $2n$ 
constraints and, therefore, can be solved very fast, provided  the
conditional expectations $\mE_{\bx}[V(\hat{\Gamma}_{\alpha_0\bx}(k))]$ can be
computed efficiently. In Section
\ref{sec:numexperiments}, we show how to efficiently approximate the
conditional expectations in (\ref{eq:threshold-lp}) using low discrepancy
sequences. Given the approximate value function $\hat{\bV}$, we approximate
the threshold
$b^\ast \approx \frac{1}{n}\max\{k : \hat{V}(k) =
r_{\alpha_1}(\sfrac{k}{n}) + \beta \hat{V}(\lceil(1-p)n\rceil) \}$. 

\section{ADR repair scheduling problem}
\label{sec:scheduling}
Suppose there are $M$ maintenance crews available to maintain $D (\gg M)$
ADRs. Before each DR event $t$, the utility must decide which (if any)
ADRs to inspect and reset.   
We formulate the ADR repair scheduling problem as a restless bandit (RB)
problem~\cite{whittle1988restless} where each of the ADR devices is a
bandit. Following the notation of 
bandit problems, we will call an ADR active if it is being
inspected, and inactive otherwise.

For RB problems, Whittle~\cite{whittle1988restless} proposed a possibly 
suboptimal policy 
based on the 
Lagrangian relaxation 
for the dynamic program.
The Lagrangian relaxation decouples the RB problem into
a collection of so called subsidy-$\mu$ problems.
For each bandit, the subsidy-$\mu$ problem is an ADR maintenance problem in which the
reward under the do-nothing action $\alpha_0$ is increased
by an amount $\mu \geq 0$.

Let $\cB_0(\mu)$ denote the set of states for which the do-nothing action 
$\alpha_0$
is optimal for a subsidy level $\mu$. An RB problem is said to be
\emph{indexable} if $\cB_0(\mu) \subseteq \cB_0(\zeta)$ whenever $\mu \leq
\zeta$.
For indexable RB problems, the Whittle index $\mu_\kappa$ of bandit $\kappa$ in
state $b_\kappa$ is defined as the minimum subsidy $\mu \geq 0$ which makes the 
passive
action $\alpha_0$ optimal at $b_\kappa$. 
Let $\cK = \{\kappa_{(\ell)}: \ell = 1, \ldots, K\}$ denote the indices of
bandits with strictly positive Whittle index arranged in decreasing order
of the index, i.e. $\mu_{\kappa_{(\ell)}} \geq \mu_{\kappa_{(\iota)}}$ whenever 
$\ell \leq
\iota$. The Whittle-index policy specifies that the set of bandits that are
activated at time $t$ is given by  $\cK_1 = \{\kappa_{(\ell)}: 1 \leq \ell \leq
\min \{K, M\}\}$, i.e. at most $M$ bandits with the largest strictly
positive Whittle index are activated. 
The next theorem ensures that the Whittle indices are well defined for our
problem. 
\begin{theorem}
  \label{thm:indexable}
  The ADR repair scheduling problem is \textit{indexable}.
\end{theorem}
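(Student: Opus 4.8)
The plan is to derive indexability directly from Corollary~\ref{cor:f-monotone}, which already does the substantive work. Recall that the RB problem is indexable iff $\cB_0(\mu)\subseteq\cB_0(\zeta)$ whenever $0\le\mu\le\zeta$, where $\cB_0(\mu)$ is the set of belief states at which the passive action $\alpha_0$ is optimal in the subsidy-$\mu$ problem. The key observation I would make first is that the subsidy-$\mu$ problem is itself an instance of the single-ADR maintenance problem of Section~\ref{sec:maintenance} --- the instance in which $r_{\alpha_0}$ is replaced by $r_{\alpha_0}+\mu$ --- so Lemma~\ref{lm:fconvex}, Theorem~\ref{thm:threshold-opt} and Corollary~\ref{cor:f-monotone} all apply to it. Corollary~\ref{cor:f-monotone} identifies the set of belief states at which the active action $\alpha_1$ is optimal in the subsidy-$\mu$ problem as the sublevel set $\{b : f(b)\le-\mu\}$ of the single, $\mu$-independent advantage function $f$, and moreover shows it is a left interval $[0,\xi(\mu)]$ (the convention being $\xi(\mu)<0$ when the set is empty), which is what gives the threshold form of the policy. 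Taking complements, $\cB_0(\mu)=\{b : f(b)\ge-\mu\}=[\xi(\mu),1]$.

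Once this identification is in hand, indexability is a one-line consequence: for $\mu\le\zeta$ one has $-\mu\ge-\zeta$, so $f(b)\ge-\mu$ implies $f(b)\ge-\zeta$, i.e. $\cB_0(\mu)=\{f\ge-\mu\}\subseteq\{f\ge-\zeta\}=\cB_0(\zeta)$; equivalently, the right endpoints $\xi(\mu)=\max\{b : f(b)\le-\mu\}$ are non-increasing in $\mu$ because sublevel sets of a fixed function shrink as the level drops. As a by-product I would record the Whittle index in closed form: $\alpha_0$ is optimal at $b$ in the subsidy-$\mu$ problem iff $\mu\ge-f(b)$, so the index of a bandit in state $b$ is $\mu_\kappa(b)=\max\{0,-f(b)\}$, which is exactly what the LP of Section~\ref{sec:maintenance} computes and what ranks the bandits in the Whittle-index policy.

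The one delicate point --- and the reason the argument is funneled through Corollary~\ref{cor:f-monotone} rather than stated in a line --- is precisely the identification of the subsidy-$\mu$ passive set with a sublevel set of the \emph{same} function $f$ for all $\mu$. The value function of the subsidy-$\mu$ problem varies with $\mu$, so the advantage function of that problem is not literally $f+\mu$; what rescues matters is the contradiction argument in the proof of Theorem~\ref{thm:threshold-opt}, re-run for the subsidy-modified reward, which shows that the subsidized problem's active set, when nonempty, must contain $b=0$ and hence is a left interval $[0,\xi(\mu)]$ whose endpoint inherits the monotonicity above. If I were proving everything from scratch, I would expect this uniform-in-$\mu$ re-run of Theorem~\ref{thm:threshold-opt}'s argument, together with monotonicity of the value function in $\mu$, to be the main obstacle; everything after it is bookkeeping, including the harmless boundary cases ($f(1)=c>0$ forces $1\in\cB_0(\mu)$ for every $\mu\ge0$, so $\cB_0(\mu)$ is never empty, and $\cB_0(\mu)=[0,1]$ when $\xi(\mu)<0$, consistent with the inclusion chain).
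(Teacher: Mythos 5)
Your proposal is correct and follows essentially the same route as the paper: both reduce indexability to Corollary~\ref{cor:f-monotone}, identifying the passive set $\cB_0(\mu)$ of the subsidy-$\mu$ problem as the complement of the sublevel set $\{b: f(b)\le-\mu\}$ of the fixed advantage function $f$, so that the threshold $b^\ast(\mu)$ is non-increasing in $\mu$ and the sets nest. The ``delicate point'' you flag --- that the subsidized problem's advantage function is not literally $f+\mu$ because its value function $V_\mu$ differs from $V$ --- is indeed the crux, and it is exactly the step the paper's own proof of Corollary~\ref{cor:f-monotone} passes over in one line, so your treatment is, if anything, more explicit than the original.
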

\begin{proof}
  The Bellman equation for the subsidy-$\mu$ problem corresponding to the
  ADR scheduling problem is given by 
  \begin{equation}
    V_\mu(b) = \max_{a \in \cA} \left\{r_a(b, \mu) + \beta \mE_\bx
      [V_\mu(\Gamma_{a \bx}(b))]\right\}, 
    \label{eq:subsidy} 
  \end{equation}
  where $\br(b, \mu) = \br(b) + (\mu, 0)^\top$.
  The set of states for which $\alpha_0$ is optimal
  is of the form $\cB_0(\mu) = \{b \in [0,1] : b > b^\ast(\mu)\}$ where
  $b^\ast(\mu)$ denotes the optimal threshold corresponding to the 
  subsidy-$\mu$ problem~\eqref{eq:subsidy}. Hence,
  in order to establish that the ADR repair scheduling problem is
  indexable, we need to show that the threshold $b^\ast(\mu)$ is non-increasing
  in $\mu$. 
  From Corollary~\ref{cor:f-monotone}, it
  follows that $[0,b^\ast(\mu)] = 
  \{b: f(b)  \leq -\mu \}$. 
  Thus, $b^\ast(\mu)$ is 
  non-increasing in $\mu$.
\end{proof}
Since the
Whittle index 
$\mu^\ast(b) = \inf\{\mu \geq 0: b \in \cB_0(\mu)\} = \inf\{\mu \geq 0: b >
b^\ast(\mu) 
\}$, it is clear that $\bar{\mu} = \inf\{\mu \geq 0: b^\ast(\mu) < 0\}$ is  
an upper bound for $\mu^\ast(b)$. 
Therefore, the Whittle
index for any $b \in B$ can be computed to within an accuracy
$\epsilon$ using a binary search 
by solving at most $\cO(\log_2 (\frac{\bar{\mu}}{\epsilon}))$ LPs of
the form (\ref{eq:threshold-lp}). Thus, in practice, one needs to solve at most
$\cO(n\log_2(\frac{\bar{\mu}}{\epsilon}))$ LPs.
On the other hand, we can also
compute Whittle indices to within $\epsilon$ accuracy by finding the optimal
threshold $b^\ast(\mu)$ for all $\mu \in \cM = \{k\epsilon: 0 \leq k \leq
\lceil \frac{\bar{\mu}}{\epsilon} \rceil \}$. Thus, the overall complexity
of computing the Whittle indices is at most
$\cO(\min\{n\log_2(\frac{\bar{\mu}}{\epsilon}), \lceil
\frac{\bar{\mu}}{\epsilon} \rceil\})$. 
In our numerical experiments, we calculated
a bound on $\bar{\mu}$ using a doubling strategy, and
used the binary search approach to compute the Whittle indices.



\section{Numerical implementation}
\label{sec:numexperiments}
Recall that as long as we can compute the conditional expectation
$\mE_{\bx}[V(\hat{\Gamma}_{\alpha_0\bx}(k))]$ efficiently, we can
solve LP~\eqref{eq:threshold-lp} and, consequently, implement the
Whittle-index policy.
In this section, 
we 
discuss how to efficiently compute the required conditional expectation in
two situations that arise in the DR context:
when
the DR load-shed is random, and 
when the clocks at the utility and at the meter are 
not synchronized. We show that these issues can have a significant impact on
the performance of the Whittle-index policy, and propose a variational
Bayes procedure to address them.
Finally, we evaluate the performance of our proposed policies
with respect to periodic review policies, and with respect to policies that
have full information of the state of the ADRs.
\subsection{Deterministic load-shed and synchronized clocks}  
\label{sec:synchronized-readings}
Let $\bx = (x_1, \ldots, x_m)$ denote the sampled power consumption over a
DR event of length $m$ periods. 
For $i = 1, \ldots, m$, we assume that the power consumption $x_i$ is given by 
\begin{equation}
x_i = \left\{\begin{array}{ll}
    y_i - r + \tilde{\varepsilon}_i & \text{if the ADR is operational} \\
    y_i + \tilde{\varepsilon}_i & \text{otherwise}, \end{array}\right. \;
\end{equation}
where $y_i$ denotes the estimated power consumption on a non-DR day, $r$ 
denotes the load-shed mandated by the utility, and the estimation residual
$\tilde{\varepsilon}_i$ 
is assumed to be
independent and identically distributed (IID) according to a normal
distribution  with mean $0$ and variance $\sigma^2$.
As will become clear
below, we can work with any specification for the
residuals $\varepsilon_i$ as long as one is able to simulate
samples from the distribution.

We approximate the conditional expectation
$\mE_\bx[V(\hat{\Gamma}_{\alpha_0\bx}(k))]$  in the 
LP~\eqref{eq:threshold-lp} by the finite weighted sum
\begin{equation}
\mE_\bx[V(\hat{\Gamma}_{\alpha_0\bx}(k))]
\approx \frac{1}{N} \sum_{j=1}^{N} V(\hat{\Gamma}_{\alpha_0\bx^{j}}(k)),
\label{eq:expectation}
\end{equation}
where the samples
$\{\bx^{j}: j = 1, \ldots, N\}$ are generated IID from the distribution
$W_{\alpha_0, b}(\bx)$, $b = \sfrac{k}{n}$, i.e. 
\begin{equation}
  \bx^j \sim \left\{ \begin{array}{ll}
      \cN(\by, \sigma^2 \bI) & \text{w.p. } 1-b, \\
      \cN(\by - r\ones, \sigma^2 \bI) & \text{w.p. } b.
    \end{array}\right.
\end{equation}
To efficiently sample from the multivariate Normal distributions, we use
low-discrepancy sequences in the unit $m$-dimensional
hypercube~\cite{sobol1998quasi}. 
Let $\bfomega^1,$ $\ldots, \bfomega^{N} \in [0,1]^m$ be the first $N$
elements of a low-discrepancy sequence. We
define $\bz^j \in \reals^m$ by setting $z_i^j = \varPhi^{-1}(w_i^j)$. Then 
$\{\bz^{j}: j = 1, \ldots, N\}$ are IID
samples from an $m$-dimensional standard Normal random
variable~\cite{devroye1986sample}. We generate meter readings sample $\bx^{j}$ 
as follows:
\begin{equation}
  \bx^j = \left\{ \begin{array}{ll}
      \by + \sigma \bz^j & \text{w.p. } 1-b, \\
      \by  - r\ones + \sigma \bz^j & \text{w.p. } b.
    \end{array}\right.
\end{equation}
Let $Q_0(\bx)$ (resp. $Q_1(\bx)$) denote the probability of observing a
vector of meter readings $\bx$ under an functional (resp. non-functional) ADR. 
Then, $Q_{\gamma_0 \alpha_0 } (\bx) = Q_{0} (\bx)$, and
$Q_{\gamma_0 \alpha_1} (\bx) = Q_{\gamma_1 \alpha_0} (\bx) = Q_{\gamma_1
  \alpha_1} (\bx) = Q_{1} (\bx)$, 
where
\begin{equation}
  Q_{0} (\bx) = \prod_{i=1}^m \varphi\left(\frac{x_i -
      y_i}{\sigma}\right),~
  Q_{1} (\bx) = \prod_{i=1}^m \varphi\left(\frac{x_i - y_i +
      r}{\sigma}\right). 
\end{equation}
Given $Q_0(\bx^j)$ and $Q_1(\bx^j)$, 
we compute $\hat{\Gamma}_{\alpha_0, \bx^j}(k) = \lceil \Gamma_{\alpha_0, \bx^j}(b) \rceil$ using
(\ref{eq:Gamma-def}). 

\subsection{Random load-shed  and unsynchonized clocks}   
\label{sec:delayed-readings}
Here, we assume that the distribution of the load-shed is
known --however, the exact magnitude is unknown-- 
and that the clock at the utility and in the meter 
can be 
mismatched up to $d$ time units. 
We 
use the utility's clock time as the reference time for all
computations, and  assume that 
meter readings are
assigned to time 
instants using the meter clock; 
therefore, an observation assigned to the time
instant~$i$ could, in fact,  
correspond to an actual instant in the set $\{i-d, \ldots, i+d\}$.  
Suppose a DR event takes place during the period $\{d+1,
\ldots, d+m\}$. Then,
\begin{equation}
  \bx \sim \left\{ \begin{array}{ll}
      \cN(\by - r\ones_\delta, \sigma^2 \bI) & \text{for some } r,
        \; \delta \in 
      \{-d, \ldots, d\}\\
      & \hspace{0.1in} \text{when the ADR operational},\\
      \cN(\by, \sigma^2 \bI) & \text{when the ADR is not operational,} \\
    \end{array}\right.
\end{equation}
where $\ones_{\delta} \in \reals^{m+2d}$ denotes a vector with the
components $i \in I_\delta = \{1 + d + \delta, \ldots, m + d + \delta\}$
equal to $1$, and all other components equal to zero. 

Let $\bfomega^1, \ldots, \bfomega^{N}$ be the first $N$ elements of a
low-discrepancy sequence in the unit $(m+2d)$-dimensional hypercube and $z_i^j =
\varPhi^{-1}(w_i^j)$. Then, the $j$-th sample from the distribution
$W_{\alpha_0, b}(\bx^j)$ is given by 
\begin{equation}
  \bx^j = \left\{ 
    \begin{array}{ll}
      \by + \sigma \bz^j & \text{w.p. } 1-b, \\
      \by  - r\ones_\delta + \sigma \bz^j & \text{w.p. }
      b\rho(r, \delta),
    \end{array}\right.
\end{equation}
where $\rho$ denotes the joint probability distribution
function of the 
load-shed and the
clock mismatch.
In this case, the observation probabilities are
\begin{align}
  Q_{0} (\bx) &= \prod_{i=1}^{m+2d} \varphi\left(\frac{x_i -
      y_i}{\sigma}\right), \\
  Q_{1} (\bx) &= 
  \mE_{\bx} \bigg[
  \prod_{i \in
    I_\delta} \varphi\left(\frac{x_i - y_i + 
      r}{\sigma}\right)  \prod_{i \notin I_\delta} \varphi\left(\frac{x_i
      - y_i}{\sigma}\right)\bigg], \nonumber
\end{align}
where the expectation is with respect to the posterior distribution
of the mismatch $\delta$ and the load-shed $r$. 
In the next section, we show how to use the variational Bayes procedure
 to approximate the posterior expectation. 
Given $Q_1(\bx^j)$ and $Q_0(\bx^j)$,
we approximate $\mE_\bx[V(\hat{\Gamma}_{\alpha_0\bx}(k))]$ 
using~\eqref{eq:expectation} and \eqref{eq:Gamma-def}.

\subsection{Belief state update using variational Bayes}
\label{sec:beliefupdate}
Consider the following
hierarchical Bayesian model 
\begin{equation}
\begin{array}{l}
  \bx = \by -r\ones_\delta + \bfvarepsilon, \hspace{30pt} r \sim \cN(\nu_o,
  1/\eta_0),\\
  \delta \sim 
  \text{unif}\{-d,\ldots,d\},
  \hspace{13pt}   \bfvarepsilon \sim \cN(\bo, \sigma^2 \bI),
\end{array}
\end{equation}
where $\nu_0$ is the expected load-shed during the DR
event, and $\eta_0^{-\sfrac{1}{2}}$ is its standard deviation.
The posterior distribution $\rho(r, \delta|\bx,\cH)$
is proportional to 
$ \mP(\bx | r, \delta, \sigma) \mP(r | \nu_0, \eta_0) 
\mP(\delta)$, where
\begin{equation}
\mP(\bx | \delta, r, \sigma) =
\prod_{i \in I_\delta} \varphi\left(\frac{x_i - y_i +
    r}{\sigma}\right) \prod_{i \notin I_\delta} \varphi\left(\frac{x_i -
    y_i}{\sigma}\right).
\end{equation}
Since this posterior distribution is neither in closed form,
nor is it easy to sample from, 
we use the variational inference method
\cite{bishop2006pattern} 
and approximate the posterior distribution 
by the
product distribution $\Delta(\delta)g(r)$. It is easy to establish that the
product distribution that minimizes the Kullback-Leibler distance from the
joint posterior distribution is of the form
\begin{eqnarray}
  \Delta(\delta) 
  &\propto&  \exp\left\{\frac{\nu}{\sigma^2}
    \ones_\delta^\top(\by-\bx) -   
    \frac{\nu^2 + \eta^{-1}}{2\sigma^2}\norm{\ones_\delta}^2_2\right\}\\
  g(r) &=& \cN(\nu, 1/\eta),
\end{eqnarray}
where 
$\eta = \eta_0 + \frac{1}{\sigma^2}\sum_{\delta=-d}^{d} \Delta(\delta)
\norm{\ones_\delta}^2_2$ and
$ \nu = \eta^{-1}\Big( \nu_0\eta_0 + \frac{1}{\sigma^2}
\sum_{\delta=-d}^{d} \Delta(\delta) \ones_\delta^\top (\by -
\bx)\Big)$. 
Note the circular dependence of the parameters in the posterior
distributions. We use an iterative procedure that alternates
between 
$\Delta(\delta)$, and the precision $\eta$ and mean $\nu$ of the
normal distribution for $r$. We terminate the procedure whenever
the relative change in the parameters is small. We approximate the
probability $Q_{1}(\bx)$ using the posterior distributions
$\Delta(\delta)$ and $g(r)$ as follows: 
\begin{align}
Q_{1}(\bx) & \approx \sum_{\ell=-L}^{L} \sum_{\delta = -d}^{d} \prod_{i
  \in I_\delta} \varphi\left(\frac{x_i - y_i + 
    r_\ell}{\sigma}\right) \\ \nonumber
& \hspace{57pt} \prod_{i \notin I_\delta} \varphi\left(\frac{x_i -
    y_i}{\sigma}\right) \Delta(\delta) g(r_\ell), 
\end{align} 
where $r_\ell$ is $\ell$ standard deviations $\eta^{-\sfrac{1}{2}}$ above
the mean $\nu$. The posterior distribution in the case where either the meter readings
are perfectly synchronized or the load-shed magnitude is deterministic can
be computed as a special case of this procedure. 


\subsection{Problem parameters and available information}
\label{sec:cases}
We consider
the ADR maintenance problem
in
the following four settings: 
\begin{enumerate}[C{a}se (a)]
\item 
  Clocks
  synchronized and load-shed deterministic. \label{case:known-synch}
\item
  Clocks possibly mismatched with $d =
  2$, but load-shed deterministic.
  \label{case:known-delay} 
\item Clocks synchronized, but 
  load-shed distributed $\cN(\nu_0,\frac{1}{\eta_0})$
  \label{case:unknown-synch} 
\item Clocks possibly mismatched with $d =
  2$, and load-shed distributed 
  $\cN(\nu_0,\frac{1}{\eta_0})$.  
\label{case:unknown-delay} 
\end{enumerate}

The parameter values for the numerical experiments were set as follows:
\begin{enumerate}[(i)]
\item Observations in an hour-long DR event  $m = 10$
\item Probability of ADR failure  $p=0.05$
\item Expected DR savings for utility $\lambda = 1$
\item Cost of repair $c = 3 \lambda = 3$
\item Discount factor $\beta = 0.9$
\item Load-shed signal-to-noise ratio $\text{SNR} = 20
  \log_{10}(\sfrac{\nu_0}{\sigma}) = \{-5,0,5\}$dB, i.e. standard
  deviation $\sigma$ of the meter noise 
  $\approx\{1.78,1,0.56\}$ times the  mean load-shed 
  $\nu_0$
\item Load-shed standard deviation 
  $\eta_0^{-\sfrac{1}{2}}
  = 0.1\sigma$ 
\end{enumerate}

\subsection{Discretization size $n$ and sample size $N$}
In Table \ref{tab:LP}, we report the optimal threshold $b^\ast$, the approximate
optimal value function $V(b)$ at belief state $b=1$,  and the
elapsed time in seconds to approximate the expectation
$\mE_\bx[V(\hat{\Gamma}_{\alpha_0\bx}(k))]$ and solve 
LP~\eqref{eq:threshold-lp}, 
as a function
of the number $n$ of points used to discretize the interval $[0,1]$ and
the number of samples $N$.
From the results, it is clear that the number of samples $N$ does not have
a significant 
impact on the value function 
$V(1)$. 
We interrupted the computation of the optimal threshold and the optimal
value for Case~\eqref{case:unknown-delay} for the largest-sized
approximations, i.e. $N=500$K. The solution time was too large to
be of practical use. In Table~\ref{tab:LP}, we report this situation with
an horizontal line. 
For the rest of the experiments in this section, we used $n=100$ and $N=5$K.  

\begin{table}
\centering
\begin{tabular}{rr|rrrr|c}
$n$ & $N$ & Case~\eqref{case:known-synch} & Case~\eqref{case:known-delay}
& Case~\eqref{case:unknown-synch} & Case~\eqref{case:unknown-delay} \\ \hline
100 & 5K & 0.160 & 0.150 & 0.170 & 0.150 & \multirow{4}{*}{$b^\ast$} \\
100 & 500K & 0.160 & 0.150 & 0.160 & ----- \\
1K & 5K & 0.168 & 0.154 & 0.171 & 0.159 \\
1K & 500K & 0.163 & 0.154 & 0.166 & ----- \\ \hline
100 & 5K & 8.374 & 8.558 & 8.498 & 8.713 & \multirow{4}{*}{$V(1)$} \\
100 & 500K & 8.309 & 8.550 & 8.448 & ----- \\
1K & 5K & 8.292 & 8.477 & 8.423 & 8.648 \\
1K & 500K & 8.275 & 8.515 & 8.412 & ----- \\ \hline
100 & 5K & 1 & 2 & 8 & 125 & \multirow{4}{*}{time(s)} \\
100 & 500K & 25 & 132 & 763 & ----- \\
1K & 5K & 86 & 102 & 159 & 1291 \\
1K & 500K & 334 & 1373 & 8205 & ----- 
\end{tabular}
\caption{Optimal threshold $b^\ast$, optimal value function $V(b)$ at
  belief state $b=1$, and time in seconds to approximate
  $\mE_\bx[V(\hat{\Gamma}_{\alpha_0\bx}(k))]$ and solve
  LP~\eqref{eq:threshold-lp}, 
  as a function of   $n$ and
  $N$.} 
\label{tab:LP}
\end{table}

\subsection{Comparison with periodic review policies}
In this section we report the 
performance of the proposed threshold policy with respect to
the current practice 
of periodically inspecting the ADRs.
Suppose the periodic review interval
is $q$ DR events. Then, the associated value function $U(q)$ satisfies
the recursion 
\begin{equation}
U(q) = -c + \lambda \sum_{j=0}^{q-1} \beta^j(1-p)^j + \beta^{q} U(q),
\end{equation}
i.e. $U(q) = \frac{1}{1-\beta^q} \left(\frac{\lambda(1 - \beta^q (1-p)^q)}{1 -
      \beta(1-p)} - c \right)$.
The value function $U(q)$ is maximized at $q^\ast=18$, where $U(q^\ast) = 4.10$.
In contrast, the POMDP value function is at least $V(0) = \{5.14, \; 5.37, \; 5.51\}$
for $\text{SNR} = \{-5, 0, 5\}$dB, i.e. a relative improvement over 
$U(q^\ast)$
of at least $\{25, \; 31, \; 34\}$\%.
These results clearly show that the POMDP-based method significantly outperforms any
regular maintenance schedule. 

\subsection{ADR repair scheduling problem}


In this section we report the numerical results for an ADR repair
scheduling problem with 
$D=100$ ADRs 
and $M=5$ repair crews.
We considered two variants of the ADR repair scheduling problem: one
where the repair costs for all the ADRs were identical and set
equal to $3\lambda$, 
and another where repair cost for each ADR were
sampled uniformly from the interval~$(0,6.5]\lambda$. 
For $c > 6.5 \lambda$,
the utility is better off not repairing the ADR at all.

When the states of the ADRs are fully observable and the repair costs are 
identical, the associated multi-dimensional DP can be
reduced to a one-dimensional DP with state given by the
number of non-working devices.  Thus, one can easily compute 
the optimal policy using the value iteration algorithm. In this case, we
compare the performance of the Whittle-index policy for the POMDP with
this optimal 
policy. When the states of the ADRs are fully observable but the repair costs 
are not identical, it is not possible to compute the optimal
policy. In this setting, we compare the Whittle-index policy for the
POMDP with the Whittle-index policy for the full
information MDP.
Given that the partially observable Whittle-index policy can 
infer the state of an ADR only \emph{after} observing the meter readings
during a DR event, a fairer comparison would be 
against the  optimal policy of the  MDP where the state of the ADR is observable 
only after a DR event. We call this problem, the \emph{slow
  information} MDP.

We computed the value function of the Whittle-index policies
using simulation. 
We simulated the performance of the policies over a time horizon of $T=44$ DR 
events with a discout factor $\beta=0.9$. This implies that the simulated 
$T$-horizon value function is within 1\% of the infinite horizon value function.
The
results are averaged over $100$ runs.
The column marked ``Whittle'' in Table \ref{tab:identical} reports the relative
error of the full information Whittle-index policy 
with respect to the optimal full
and slow information policy when all the repair costs are identical.   The next
four columns report 
the performance of the partial information Whittle-index policy for the
four different cases listed in
Section~\ref{sec:cases}. Since the partial information policy does not have
access to the 
state, its performance depends on the load-shed SNR. The row marked by SNR 
$[-5,5]$
reports the performance of the policy when the SNR of each ADR is sampled
uniformly from the interval $[-5,5]$; all other rows report the
performance when the SNR for all ADRs was set equal to the value
corresponding to that row. 

The full
information Whittle-index policy is close to the optimal policy both in the full
information and the slow information case.
For
very low noise levels, i.e. SNR=$5$dB, the performance of partial
information Whittle-index policy is no more than $4.63$\% (resp. $1.57$\%)
worse than that of the full (resp. slow) information
optimal policy. On the other hand, for very high noise levels,
i.e. SNR=$-5$dB, the partial information 
Whittle-index policy could be as bad as $10.75$\% (resp. $7.89$\%)
suboptimal with respect to the full (resp. slow) information optimal policy. For
reasonable noise levels, i.e. SNR=$0$dB for all ADRs, or 
randomly drawn from $[-5, 5]$dB, the performance of the partial
information Whittle index 
policy is within $6.94$\% (resp. $3.95$\%) 
of the full (resp. slow)
information optimal policy.  The results in this table suggest that
the four cases in decreasing order of the sub-optimality of the Whittle policy
are: 
\eqref{case:unknown-delay}, \eqref{case:known-delay},
\eqref{case:unknown-synch}, \eqref{case:known-synch}. Thus, 
it appears that 
the uncertainty in clock mismatch leads to an increased loss in
performance as compared to the uncertainty in the load-shed. 

Table \ref{tab:different} shows the relative error of the partial
information Whittle-index policy with respect to the full and slow
information Whittle-index policy when the repair costs are not identical.
Assuming that the full information Whittle-index policy is close to the
optimal policy also in this case, the results in this table are similar to
those 
observed in the case of identical repair costs. 

\begin{table}
\centering
\begin{tabular}{c|rrrrr|c}
\multirow{2}{*}{SNR}&\multicolumn{5}{c|}{err (\%)}\\
& Whittle & Case \eqref{case:known-synch} & Case
\eqref{case:known-delay} & Case \eqref{case:unknown-synch} & Case
\eqref{case:unknown-delay} \\ \hline 
5 & 1.28 & 4.63 & 4.62 & 4.60 & 4.61 & \multirow{4}{*}{Full} \\
0 & 1.28 & 5.38 & 5.92 & 5.77 & 6.56 &  \\
-5 & 1.28 & 8.48 & 9.63 & 9.23 & 10.75 &  \\
$\text{[-5,5]}$ & 1.28 & 5.76 & 6.3 & 6.14 & 6.94 &  \\ \hline \hline
5 & 1.66 & 1.57 & 1.56 & 1.54 & 1.55 & \multirow{4}{*}{Slow} \\
0 & 1.66 & 2.35 & 2.9 & 2.75 & 3.57 &  \\
-5 & 1.66 & 5.55 & 6.73 & 6.32 & 7.89 &  \\
$\text{[-5,5]}$ & 1.66 & 2.74 & 3.3 & 3.13 & 3.95 &
\end{tabular}
\caption{Identical repair costs. Average error of the full, slow, and partial
  information Whittle-index policies w.r.t. the optimal full and
  slow information policies. See Section~\ref{sec:cases}
  for the details on partial information 
  Cases~\eqref{case:known-synch}-\eqref{case:unknown-delay}.} 
\label{tab:identical}
\end{table}

\begin{table}
\centering
\begin{tabular}{c|rrrr|c}
\multirow{2}{*}{SNR}&\multicolumn{4}{c|}{err (\%)}\\
& Case~\eqref{case:known-synch} & Case~\eqref{case:known-delay} & Case~\eqref{case:unknown-synch} & Case~\eqref{case:unknown-delay} \\ \hline
5 & 3.28 & 3.32 & 3.32 & 3.39 & \multirow{4}{*}{Full} \\
0 & 4.26 & 4.87 & 4.7 & 5.45 &  \\
-5 & 7.15 & 8.04 & 7.73 & 8.93 &  \\
$\text{[-5,5]}$ & 4.61 & 5.15 & 4.96 & 5.66 &  \\ \hline \hline
5 & -0.15 & -0.11 & -0.1 & -0.03 & \multirow{4}{*}{Slow} \\
0 & 0.87 & 1.5 & 1.32 & 2.1 &  \\
-5 & 3.86 & 4.78 & 4.46 & 5.7 &  \\
$\text{[-5,5]}$ & 1.23 & 1.78 & 1.59 & 2.32 &
\end{tabular}
\caption{Non-identical repair costs. Average error of the partial information
  Whittle-index policy with respect to the full and slow information
  Whittle-index policies. See Section~\ref{sec:cases}
  for the details on partial information Cases~\eqref{case:known-synch}-\eqref{case:unknown-delay}.} 
\label{tab:different}
\end{table}

\section{Conclusions}
In this paper, we formulated and solved the ADR repair scheduling
problem where the goal is to maintain
$D$ ADRs using at most $M (\ll D)$ maintenance crews and the ADR state is
only partially observable via noisy meter readings. We formulated this
problem as a 
restless bandit problem. We showed that the ADR repair is Whittle-indexable,
and therefore, one can very efficiently compute a good heuristic policy by suitably
decomposing the $D$-ADR repair scheduling problem into $D$ single-ADR 
maintenance
problems. We showed that the optimal solution of the  single-ADR
maintenance problem is a threshold policy,  where it is optimal to send
the maintenance crew as soon the belief state drops below a
threshold. Using the structure of the single-ADR optimal policy we showed
that the Whittle index as a function of the belief state of an ADR can be
computed via a single binary search. 
We explored the performance of the Whittle-index policies
when the meter and utility clocks are (resp. not) synchronized and the load-shed
is random (resp. deterministic)  (see Section~\ref{sec:cases} for the details
of the four cases).
We also  developed a hierarchical
Bayesian method for computing the joint posterior distribution of the
load-shed and clock mismatch. 
Our numerical experiments suggest that, when the level of noise in the
meter readings is 
on average of the same size of the load shed, the Whittle-index policy is at
most $3.95$\%
suboptimal.
This problem and its solution was motivated by our research collaboration with
AutoGrid, a software provider for managing DR programs.

\bibliographystyle{unsrt}
\bibliography{references}

\vspace{-0.8cm}

\begin{biographynophoto}{Carlos Abad}
is a PhD candidate in the Industrial Engineering and Operation Research 
Department at Columbia University. His 
research 
interests include convex optimization,
compressive sensing, Bayesian inference, decision making under uncertainty, and 
their applications to the operation and economics of electric grids.
\end{biographynophoto}

\vspace{-0.8cm}

\begin{biographynophoto}{Garud Iyengar}
received a Ph.D. in Electrical Engineering from Stanford University. He joined 
Columbia University's Industrial Engineering and Operations Research Department 
in 1998
 and teaches courses in asset allocation, asset pricing, simulation and 
optimization. 
 His research interests include 
convex, robust and combinatorial optimization, queuing networks, mathematical 
and computational finance, communication and 
information theory.
\end{biographynophoto}

\end{document}